\documentclass[12pt, a4paper, twoside, reqno]{amsart}

\usepackage[margin=2.85cm]{geometry}

\usepackage[T1]{fontenc}
\usepackage[utf8]{inputenc}
\usepackage{amsmath}
\usepackage{amsthm}
\usepackage{amsfonts}
\usepackage{MnSymbol}
\usepackage{bbm}
\usepackage[UKenglish]{babel}
\usepackage{enumerate}
\usepackage[dvips]{graphicx}
\usepackage{bm}
\usepackage{ae}

\usepackage[draft,english]{fixme}

\theoremstyle{definition}

\theoremstyle{remark}

\theoremstyle{plain}
\newtheorem{thm}{Theorem}

\newtheorem{cor}[thm]{Corollary}



 \newcommand{\NN}{\mathbb{N}}
 \newcommand{\ZZ}{\mathbb{Z}}
 \newcommand{\QQ}{\mathbb{Q}}
 \newcommand{\RR}{\mathbb{R}}

 \newcommand{\abs}[1]{\left\lvert #1 \right\rvert}
 
 \newcommand{\inv}[1][1]{^{- #1}}
 \newcommand{\set}[1]{\left\lbrace #1 \right\rbrace}

\DeclareMathOperator{\supp}{supp}
\DeclareMathOperator{\dist}{dist}
\DeclareMathOperator{\Bad}{Bad}
\DeclareMathOperator{\DI}{DI}

\begin{document}

\title{Some remarks on Mahler's classification in higher dimension}

\author{S. KRISTENSEN, S. H. PEDERSEN, B. WEISS}

\address{S. KRISTENSEN, Department of Mathematics, Aarhus University,
  Ny Munkegade 118, DK-8000 Aarhus C, Denmark}

\email{sik@math.au.dk}

\address{S. H. PEDERSEN, Department of Mathematics, Aarhus University,
  Ny Munkegade 118, DK-8000 Aarhus C, Denmark}

\email{steffenh@math.au.dk}

\address{B. WEISS, 	Department of Mathematics, Tel Aviv University, Tel-Aviv, 69978 Israel}

\email{barakw@post.tau.ac.il}

\thanks{SK and SHP supported by the Danish Natual Science Research Council.}

\subjclass[2010]{11J82, 11J83}

\begin{abstract}
 We prove a number of results on the metric and non-metric theory of
 Diophantine approximation for Yu's multidimensional variant of
 Mahler's classification of transcendental numbers. Our results arise as applications of well known results in Diophantine approximation to the setting of Yu's classification.
 \end{abstract}

\maketitle

\section{Introduction}
\label{sec:introduction}

In \cite{Mahler32}, Mahler introduced a classification of
transcendental numbers in terms of their approximation properties by
algebraic numbers. More precisely, he introduced for each $k \in
\mathbb{N}$ and each $\alpha \in \mathbb{R}$ the Diophantine exponent 
\begin{multline}
\label{eq:1}
\omega_k(x) = \sup \{\omega \in \mathbb{R} \colon \vert P(x)\vert \le
H(P)^{-\omega} \\ \text{ for infinitely many irreducible } P \in \mathbb{Z}[X],
\deg(P)\le k \}. 
\end{multline}
Here, $H(P)$ denotes the naive height of the polynomial $P$, i.e. the
maximum absolute value among the coefficients of $P$. 

Mahler defined classes of numbers according to the asymptotic
behaviour of these exponents as $k$ increases. More precisely, let  
\begin{equation*}
\omega(x) = \limsup_{k\rightarrow \infty} \frac{\omega_k(x)}{k}.
\end{equation*}
The number $x$ belongs to one of the following four classes.
\begin{itemize}
\item $x$ is an $A$-number if $\omega(x) =0$, so that $x$ is algebraic
  over $\mathbb{Q}$. 
\item $x$ is an $S$-number if $0 < \omega(x) < \infty$.
\item $x$ is a $T$-number if $\omega(x) = \infty$, but $\omega_k(x) <
  \infty$ for all $k$. 
\item $x$ is a $U$-number if $\omega(x) = \infty$ and $\omega_k(x) =
  \infty$ for all $k$ large enough. 
\end{itemize}
All four classes are non-empty, with almost all real numbers being
$S$-numbers. Every real number belongs to one of the classes, and the
classes are invariant under algebraic operations over $\mathbb{Q}$.  

In analogy with Mahler's classification, Koksma \cite{Koksma39}
introduced a different classification based on the exponent 
\begin{equation*}
\omega^*_k(\alpha) = \sup \{\omega^* \in \mathbb{R} \colon \vert x-
\alpha \vert \le H(\alpha)^{-\omega^*} \text{ for infinitly many }
\alpha \in \overline{\mathbb{Q}}\cap \mathbb{R}, \deg(\alpha)\le k
\}. 
\end{equation*}
In this case, $H(\alpha)$ denotes the naive height of $\alpha$,
i.e. the naive height of the minimal integer polynomial of $\alpha$. In analogy with Mahler's classification, one
defines $w^*(x)$ and $A^*$-, $S^*$-, $T^*$- and $U^*$-numbers. 

The reader is referred to the monograph \cite{bugeaud} for an
excellent overview of the classifications and their properties. A
particular property is that the classifications coincide, so that
$A$-numbers are $A^*$-numbers, $S$-numbers are $S^*$-numbers and so
on. The individual exponents however need not coincide. 

In \cite{Yu87}, Yu introduced a  classification similar to Mahler's
for $d$-tuples of real numbers. In brief, the classification is
completely similar, except that the exponents $\omega_k(x)$ are now
defined in terms of integer polynomials in $d$ variables.

An analogue of Koksma's classification was introduced by Schmidt
\cite{Sch06}. However, the relation between the two classifications is
not at all clear, and it is conjectured that the two classifications
do not agree \cite{Sch06}. 

It is the purpose of the present note to study the Diophantine
approximation problems arising within Yu's classification. We recall
the simple connection between the questions arising from Mahler's
classification, and the problem of Diophantine approximation with
dependent quantities. A classical problem in Diophantine
approximation, given $\mathbf{x} = (x_1, \ldots, x_d) \in
\mathbb{R}^d$, is to find $\omega$ for which 
\begin{equation}\label{eq: new label}
\Vert \mathbf{q} \cdot \mathbf{x} \Vert
\le 
(\max_{1 \leq i \leq d} |q_i|)^{-\omega} \text{ for infinitely many }
\mathbf{q} = (q_1, \ldots, q_d)\in \mathbb{Z}^d, 
\end{equation}
where as usual $\Vert \cdot \Vert$ denotes the distance to the nearest
integer. Comparing (\ref{eq:1}) and (\ref{eq: new label}), one sees
that one can define Mahler's exponents $\omega_k$ by restricting the
classical problem to a consideration of vectors 
$\mathbf{x}$ belonging to the Veronese curve  
\begin{equation*}
\Gamma = \left\{ (x, x^2, \dots, x^k) \in \mathbb{R}^k : x \in
  \mathbb{R}\right\}. 
\end{equation*}
Similarly, in order to understand the exponents arising in Yu's
classification, one should once more consider the 
corresponding problem of a single linear form, but replace the
Veronese curve by the variety obtained by letting the coordinates
consist of the distinct non-constant monomials in $d$ variables of total degree at most
$k$, say. The resulting Diophantine approximation properties
considered in this case would correspond to the multidimensional
analogue of $\omega_k$, i.e. 
\begin{multline*}
\omega_k(\mathbf{x}) = \sup \{\omega \in \mathbb{R} \colon \vert
P(\mathbf{x})\vert \le H(P)^{-\omega} \text{ for infinitely many }\\ 
 P \in \mathbb{Z}[X_1, \dots, X_d], \deg(P)\le k \}.
\end{multline*}

Throughout, let $n = \binom{k+d}{d}-1$ be the number of nonconstant
monomials in $d$ variables of total degree at most $k$. In addition to
the usual, naive height $H(P)$, we will also use the following
modification $\tilde{H}(P)$, which is the maximum absolute value of
the coefficients of the non-contant terms of $P$. The following is a
slight re-statement of \cite[Theorem 1]{Yu87}. 

\begin{thm}
\label{thm:Dir}
For any $\mathbf{x} = (x_1, \dots, x_d) \in \mathbb{R}^d$, there
exists $c(k, \mathbf{x})> 0$ such that for all $Q > 1$, there is a
polynomial $P \in \mathbb{Z}[X_1, \dots, X_d]$ of total degree at most
$k$ and height $H(P) \le Q$, such that  
\begin{equation*}
\vert P(\mathbf{x}) \vert < c(k, \mathbf{x}) Q^{-n}.
\end{equation*}
Replacing the condition $H(P) \le Q$ by $\tilde{H}(P) \le Q$, we may
always choose $c(k, \mathbf{x}) = 1$. 
\end{thm}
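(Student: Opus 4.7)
The plan is to reduce the statement to a standard Dirichlet/pigeonhole argument on the image of $\mathbf{x}$ under the monomial map. Let $\phi\colon\mathbb{R}^d\to\mathbb{R}^n$ denote the map sending $\mathbf{x}$ to the vector whose coordinates are the $n=\binom{k+d}{d}-1$ non-constant monomials in $\mathbf{x}$ of total degree at most $k$, listed in some fixed order. Then a polynomial $P\in\mathbb{Z}[X_1,\dots,X_d]$ of total degree at most $k$ with constant term $a_0$ and vector of non-constant coefficients $\mathbf{a}\in\mathbb{Z}^n$ satisfies the identity $P(\mathbf{x}) = a_0 + \mathbf{a}\cdot\phi(\mathbf{x})$, while $\tilde{H}(P)=\lVert\mathbf{a}\rVert_{\infty}$ and $H(P)=\max(|a_0|,\lVert\mathbf{a}\rVert_{\infty})$.

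Next I would run the classical pigeonhole argument on the $n$ real numbers $\phi(\mathbf{x})_1,\dots,\phi(\mathbf{x})_n$. Letting $\mathbf{a}$ range over the $(\lfloor Q\rfloor+1)^n$ integer vectors in $\{0,1,\dots,\lfloor Q\rfloor\}^n$ and dividing the unit interval into $\lfloor Q\rfloor^n$ equal pieces, two distinct choices $\mathbf{a},\mathbf{a}'$ produce values of $\mathbf{a}\cdot\phi(\mathbf{x})$ whose fractional parts agree to within $Q^{-n}$. Setting $\mathbf{b}=\mathbf{a}-\mathbf{a}'$ and choosing $a_0\in\mathbb{Z}$ to be minus the nearest integer to $\mathbf{b}\cdot\phi(\mathbf{x})$ yields a non-zero polynomial $P$ with $\tilde{H}(P)=\lVert\mathbf{b}\rVert_\infty\le Q$ and $|P(\mathbf{x})|=\lVert\mathbf{b}\cdot\phi(\mathbf{x})\rVert\le Q^{-n}$. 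This already gives the second assertion with $c(k,\mathbf{x})=1$.

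For the first assertion one must in addition control $|a_0|$, which is the only place where a constant depending on $\mathbf{x}$ enters. From $|a_0|\le|\mathbf{b}\cdot\phi(\mathbf{x})|+1\le n\,Q\,M+1$, where $M=\max_{1\le i\le n}|\phi(\mathbf{x})_i|$, we see that $H(P)\le C\,Q$ for some constant $C=C(k,\mathbf{x})$. Applying the pigeonhole construction with $Q/C$ in place of $Q$ (for $Q$ large enough that $Q/C>1$; smaller $Q$ are handled by a harmless adjustment of the constant) then yields a polynomial with $H(P)\le Q$ and $|P(\mathbf{x})|\le (Q/C)^{-n}=C^n Q^{-n}$, so that the conclusion holds with $c(k,\mathbf{x})=C^n$.

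The only point requiring any care is the choice of $C$ and verifying that the rescaling argument covers all $Q>1$; no step presents a genuine obstacle, since everything reduces to Dirichlet's simultaneous approximation theorem applied to the coordinates of $\phi(\mathbf{x})$.
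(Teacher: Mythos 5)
Your plan is correct and is essentially the argument the paper has in mind: the paper gives no detailed proof, stating only that the result is Yu's Theorem 1 and follows from the pigeonhole principle exactly as in the classical proof of Dirichlet's theorem for a single linear form, which is precisely what you carry out on the monomial vector $\phi(\mathbf{x})$, with the bound on the constant term correctly explaining why the $\tilde{H}$ version needs no constant while the $H$ version acquires $c(k,\mathbf{x})$. The only caveat is the usual rounding pedantry ($\lfloor Q\rfloor^{-n}$ versus $Q^{-n}$ for non-integer $Q$), which is a standard technicality and not a gap in the approach.
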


The proof is essentially an application of the pigeon hole principle,
and is completely analogous to the classical proof of Dirichlet's
approximation theorem in higher dimension. As a standard corollary,
one obtains the first bounds on the exponents $\omega_k(\mathbf{x})$. 

\begin{cor}
\label{cor:Dir}
For any $\mathbf{x} = (x_1, \dots, x_d) \in \mathbb{R}^d$, there
exists a $c(k, \mathbf{x}) > 0$ such that 
\begin{equation*}
\vert P(\mathbf{x}) \vert < c(k, \mathbf{x}) H(P)^{-n},
\end{equation*}
for infinitely many $P \in \mathbb{Z}[X_1, \dots, X_d]$ of total
degree at most $k$. In particular, $\omega_k(\mathbf{x}) \ge n$. 
\end{cor}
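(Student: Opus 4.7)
The plan is to deduce the corollary from Theorem \ref{thm:Dir} by applying it along an unbounded sequence of values of $Q$ and then verifying that the polynomials produced are genuinely distinct (and of unbounded height).

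First, I would fix a sequence $Q_i \to \infty$ (say $Q_i = 2^i$) and apply Theorem \ref{thm:Dir} to obtain polynomials $P_i \in \mathbb{Z}[X_1,\dots,X_d]$ of total degree at most $k$ with $H(P_i) \le Q_i$ and $|P_i(\mathbf{x})| < c(k,\mathbf{x}) Q_i^{-n}$. Since $H(P_i) \le Q_i$ implies $H(P_i)^{-n} \ge Q_i^{-n}$, each such polynomial automatically satisfies
\begin{equation*}
|P_i(\mathbf{x})| < c(k,\mathbf{x}) H(P_i)^{-n},
\end{equation*}
so all that remains is to extract an infinite subfamily of \emph{distinct} polynomials from the $P_i$.

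The main (and essentially only) obstacle is to rule out the degenerate possibility that the $P_i$ collapse to finitely many polynomials. I would split into two cases. If some $P_i$ vanishes at $\mathbf{x}$, then every integer multiple $mP_i$ also vanishes at $\mathbf{x}$ and trivially satisfies $|mP_i(\mathbf{x})| = 0 < c(k,\mathbf{x}) H(mP_i)^{-n}$, yielding infinitely many distinct polynomials with the required property. Otherwise $P_i(\mathbf{x}) \ne 0$ for every $i$; but then the numbers $|P_i(\mathbf{x})|$ are positive and bounded above by $c(k,\mathbf{x}) Q_i^{-n} \to 0$, while any fixed polynomial contributes a single positive value to this sequence. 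Consequently the set $\{P_i\}$ cannot be finite, and we obtain infinitely many distinct polynomials $P$ satisfying $|P(\mathbf{x})| < c(k,\mathbf{x}) H(P)^{-n}$.

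Finally, to derive $\omega_k(\mathbf{x}) \ge n$, I would note that since there are only finitely many polynomials of bounded degree and bounded height, the distinct polynomials produced above must have $H(P) \to \infty$. Hence for every $\varepsilon > 0$, the inequality $c(k,\mathbf{x}) \le H(P)^{\varepsilon}$ holds for all but finitely many of them, giving $|P(\mathbf{x})| < H(P)^{-(n-\varepsilon)}$ for infinitely many $P$. By the definition of $\omega_k$ this yields $\omega_k(\mathbf{x}) \ge n-\varepsilon$, and letting $\varepsilon \to 0$ gives the desired bound.
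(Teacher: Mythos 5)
Your argument is correct and is exactly the standard deduction the paper has in mind (the paper gives no proof, calling it ``a standard corollary'' of Theorem \ref{thm:Dir}): apply the theorem along $Q_i \to \infty$, note $H(P_i)\le Q_i$ transfers the bound, and rule out finiteness of the family, with the vanishing case handled separately and the $\varepsilon$-trick giving $\omega_k(\mathbf{x})\ge n$. Nothing is missing.
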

The corollary tells us what the normalising factor in the
multidimensional definition of $\omega(\mathbf{x})$ should be, namely the
number of non-constant monomials in $d$ variables of total degree at
most $k$. 

Inspired by the above result, we will define the notions of $k$-very
well approximable, $k$-badly approximable, $k$-singular and
$k$-Dirichlet improvable. We will then proceed to prove that the set
defined in this manner are all Lebesgue null-sets and so are indeed
exceptional. In the case of $k$-badly approximable results, we will
also show that these form a thick set, i.e. a set whose intersection
with any ball has maximal Hausdorff dimension. In fact, many of our
results are somewhat stronger than these statements. The properties
are all consequences of other work by various authors (see
below). Finally, we will deduce a Roth type theorem from Schmidt's
Subspace Theorem \cite{Sch80}. 

It is not the aim of the present paper to prove deep results concerning Yu's classification, but rather to examine the extent to which already existing methods have something interesting to say about the classification.

\section{Results and proofs}
In each of the following subsections we introduce a property of
approximation of $d$-tuples of real numbers by algebraic numbers, and
prove a result about it which extends previous results known in case
$d=1$.

\subsection{$k$-very well approximable points}
	A point $\mathbf{x} = (x_1, \dots, x_d) \in \RR^d$ is called
        \emph{$k$-very well approximable} if there exists $\varepsilon > 0$
        and infinitely many polynomials $P \in \ZZ[X_1, \dots, X_d]$
        of total degree at most $k$, such that  
	\begin{equation}\label{eq: new again}
		\abs{P(\mathbf{x})} \leq H(P)^{-(n+\varepsilon)}.	
	\end{equation}
In other words, $\mathbf{x}$ is $k$-very well approximable if the
exponent $n$ on the right hand side in Corollary \ref{cor:Dir} can be
increased by a positive amount. We will prove that this property is
exceptional in the sense that almost no points with respect to the
$d$-dimensional Lebesgue measure are $k$-very well approximable. In
fact, we will show that this property is stable under restriction to
subsets supporting a measure with nice properties. 

We recall some properties of measures from \cite{KLW04}. A measure
$\mu$ on $\RR^d$ is said to be \emph{Federer} (or doubling) if there
is a number $D > 0$ such that for any $x \in \supp(\mu)$ and any $r >
0$, the ball $B(x,r)$ centered at $x$ of radius $r$ satisfies
\begin{equation}
\label{eq:federer}
\mu\big(B(x,2r)\big) < D \mu\big(B(x,r)\big).
\end{equation}
The measure $\mu$ is said to be \emph{absolutely decaying} if for some
pair of numbers $C, \alpha > 0$ 
\begin{equation}
\label{eq:abs_decay}
\mu\left(B(x,r) \cap \mathcal{L}^{(\varepsilon)}\right) \le C
\left(\frac{\varepsilon}{r}\right)^\alpha \mu\big(B(x,r)\big), 
\end{equation}
for any ball $B(x,r)$ with $x \in \supp(\mu)$ and any affine
hyperplane $\mathcal{L}$, where $\mathcal{L}^{(\varepsilon)}$ denotes
the $\varepsilon$-neighbourhood of $\mathcal{L}$.  
A weaker variant of the property of being absolutely decaying is
obtained by replacing $r$ in the denominator on the right hand side of
\eqref{eq:abs_decay} by the quantity 
\begin{equation*}
\sup\{c > 0 \colon \mu(\{z \in B(x,r) : \dist(z,\mathcal{L})>c\}) > 0\}.
\end{equation*}
In this case, we say that $\mu$ is \emph{decaying}.
If the measure $\mu$ has the property that 
\begin{equation}
\label{eq:nonplanar}
\mu(\mathcal{L})=0,
\end{equation}
for any affine hyperplane $\mathcal{L}$, $\mu$ is called
\emph{non-planar}. Note that an absolutely decaying measure is
automatically non-planar, but a decaying measure need not be
non-planar. Finally, $\mu$ is called \emph{absolutely friendly}
if it is Federer and absolutely decaying, and is called \emph{friendly}
if it is Federer, decaying, and non-planar. 

	\begin{thm}
	Let $\mu$ be an absolutely decaying Federer measure on
        $\RR^d$. For any $k \in \NN$, the set of $k$-very well
        approximable points is a null set with respect to $\mu$. In
        particular, Lebesgue almost-no points are $k$-very well
        approximable.			 
	\end{thm}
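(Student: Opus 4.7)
The approach is to translate the $k$-very well approximable condition on $\mathbf{x} \in \mathbb{R}^d$ into the classical very well approximable condition on the image of $\mathbf{x}$ under the monomial embedding, and then invoke the main metric theorem of \cite{KLW04} for friendly measures composed with nondegenerate maps.

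First, I would introduce the monomial map $\Phi\colon \mathbb{R}^d \to \mathbb{R}^n$ whose coordinates are the $n = \binom{k+d}{d}-1$ non-constant monomials of total degree at most $k$ in $d$ variables. Every $P \in \mathbb{Z}[X_1,\ldots,X_d]$ of degree at most $k$ factors uniquely as $P(\mathbf{y}) = a_0 + \mathbf{q} \cdot \Phi(\mathbf{y})$ with $a_0 \in \mathbb{Z}$ and $\mathbf{q} \in \mathbb{Z}^n$. Working locally on a bounded piece of $\supp(\mu)$ (which suffices, since the $k$-very well approximable set is a countable union of local pieces), $\abs{\Phi(\mathbf{x})}$ stays bounded, so choosing $a_0$ to be the nearest integer to $-\mathbf{q}\cdot\Phi(\mathbf{x})$ forces $\abs{a_0}$ to be at most a constant multiple of $\max_i \abs{q_i}$. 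Consequently $H(P)$ and $\max_i \abs{q_i}$ are comparable, and $\abs{P(\mathbf{x})} \leq H(P)^{-(n+\varepsilon)}$ is equivalent, up to an arbitrarily small change in $\varepsilon$, to $\norm{\mathbf{q}\cdot\Phi(\mathbf{x})} \leq (\max_i \abs{q_i})^{-(n+\varepsilon)}$. Thus $\mathbf{x}$ is $k$-very well approximable if and only if $\Phi(\mathbf{x})$ is very well approximable in the classical sense of a single linear form in $n$ variables.

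Next I would verify that $\Phi$ is nondegenerate at every point in the sense used in \cite{KLW04}: the partial derivatives of $\Phi$ up to order $k$ span $\mathbb{R}^n$ everywhere. This is immediate because the non-constant monomials of degree at most $k$, together with $1$, form a basis of the space of polynomials of degree at most $k$, so no nontrivial linear combination of the coordinate functions of $\Phi$ can vanish to all orders at any point. With this nondegeneracy in hand, the main metric theorem of \cite{KLW04} for absolutely friendly measures and nondegenerate maps asserts that for $\mu$-a.e. $\mathbf{x}$, the image $\Phi(\mathbf{x})$ is not very well approximable. Combined with the equivalence established in the previous paragraph, this immediately yields that the $k$-very well approximable points form a $\mu$-null set, and specializing $\mu$ to the Lebesgue measure (which is absolutely friendly) gives the second assertion.

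The main obstacle is the bookkeeping in the first step, namely controlling the interplay between $H(P)$, $\abs{a_0}$, and $\max_i \abs{q_i}$ so that the exponent $n+\varepsilon$ transfers cleanly in both directions of the equivalence, and choosing the localization so that all the relevant constants remain uniform. Once that is done, the nondegeneracy check is routine and the final step is a direct application of the cited theorem.
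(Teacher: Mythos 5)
Your proposal is correct and follows essentially the same route as the paper: the monomial embedding, its nondegeneracy, the fact that the pushforward of an absolutely decaying Federer measure under this nondegenerate map falls under the (strong) extremality theorem of \cite{KLW04}, and the dictionary between integer polynomials of total degree at most $k$ and linear forms in the monomials. The only cosmetic difference is that the paper states extremality in the simultaneous form and passes to the single-linear-form statement via Khintchine's transference principle, using only the one-sided bound $H(P)\ge H(\mathbf{q})$, whereas you assert the dual-form statement directly and set up a two-sided equivalence via localization, which is more than is needed.
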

Our proof relies on results of \cite{KLW04}, in which the case $d=1$
was proved. 	
	\begin{proof}
	Let $f : \RR^d \to \RR^n$ be defined by $f(x_1, \dots, x_d) = (x_1,x_2, \dots,
        x_{d-1}x_d^{k-1} ,x_d^k)$, so that $f$ maps $(x_1, \dots, x_d)$
        to the $n$ distinct nonconstant monomials in $d$ variables of
        total degree at most $k$. Clearly, $f$ is smooth, and by
        taking partial derivatives, we easily see that $\RR^n$ may be
        spanned by the partial derivatives of $f$ of order up to $k$. 

 From \cite[Theorem 2.1(b)]{KLW04} we immediately see that the
 pushforward $f_*\mu$ is a friendly measure on $\mathbb{R}^n$. We now apply
 \cite[Theorem 1.1]{KLW04}, which states that a friendly measure is
 strongly extremal, i.e. for any $\delta > 0$, almost no points in the
 support of the measure have the property that  
 \begin{equation*}
 \prod_{i=1}^n \vert q y_i - p_i\vert < q^{-(1+\delta)},
 \end{equation*}
 for infinitely many $\mathbf{p} \in \ZZ^n$, $q \in \NN$. Clearly,
 this implies the weaker property of extremality, i.e. that for any
 $\delta' > 0$,  almost no
 points in the support of the measure satisfy
  \begin{equation}
  \label{eq:simul}
 \max_{1 \le i \le n} \vert q y_i - p_i\vert < q^{-(\frac{1}{n}+\delta')},
 \end{equation}
for infinitely many $\mathbf{p} \in \ZZ^n$, $q \in \NN$. 

To get from the above to a proof of the theorem, we need to
re-interpret this in terms of polynomials. We apply Khintchine's
transference principle \cite[Theorem V.IV]{cassels} to see that
\eqref{eq:simul} is satisfied infinitely often if and only if  
\begin{equation}
\label{eq:form}
\vert \mathbf{q} \cdot \mathbf{y} - p \vert < H(\mathbf{q})^{-(n+\delta'')},
\end{equation}
for infinitely many $\mathbf{q} \in \ZZ^n$, $p \in \ZZ$, where
$\delta'' >0$ can be explicitly bounded in terms of $n$ and
$\delta'$. Now, $\mathbf{y}$ lies in the image of $f$, so that the
coordinates of $\mathbf{y}$ consist of all monomials in the variables $(x_1,
\dots x_d)$, whence any polynomial in these $d$ variables may be
expressed on the form $P(\mathbf{x}) = \mathbf{q} \cdot \mathbf{y} -
p$. 
The coefficients of $P$ include all the coordinates of $\mathbf{q}$
and hence $H(P) \geq H(\mathbf{q})$, so that if (\ref{eq: new again})
holds for infinitely many 
$P$ with $\varepsilon = \delta''$, then \eqref{eq:form} holds for
infinitely many $\mathbf{q}, p$. 
Since the latter condition is satisfied on a set of $\mu$-measure
zero, it follows that $\mu$-almost all points in $\mathbb{R}^d$ are
not $k$-very well approximable. 

The final statement of the theorem follows immediately, as the
Lebesgue measure clearly is Federer and absolutely decaying. 
	\end{proof}

Some interesting open questions present themselves at this stage. One can ask whether a vector exists which is $k$-very well approximable for all $k$. We will call such vectors \emph{$k$-very very well approximable}. It is not difficult to prove that the set of $k$-very well approximable vectors is a dense $G_\delta$-set, so the question of existence can be easily answered in the affirmative. However, determining the Hausdorff dimension of the set of very very well approximable vectors is an open question. When $d = 1$, it is known that the Hausdorff dimension is equal to $1$ due to work of Durand \cite{Durand08}, but the methods of that paper do not easily extend to larger values of $d$.

Taking the notion one step further, one can ask whether vectors $\mathbf{x} \in \mathbb{R}^d$ exist such that for some fixed $\varepsilon > 0$, for any $k \in \mathbb{N}$, there are infinitely many integer polynomials $P$ in $d$ variables of total degree at most $k$, such that
\begin{equation*}
	\abs{P(\mathbf{x})} \leq H(P)^{-(n+\varepsilon)},
\end{equation*}
where as usual $n = \binom{n+d}{d} - 1$, i.e. in addition to $\mathbf{x}$ being very very well approximable, we require the very very very significant improvement in the rate of approximation to be uniform in $k$. We will call such vectors \emph{very very very well approximable}. Determining the Hausdorff dimension of the set of very very very well approximable numbers is an open problem.

\subsection{$k$-badly approximable points}
	A point $\mathbf{x} = (x_1, \dots, x_d) \in \RR^d$ is called \emph{$k$-badly approximable} if there exists $C = C(k, \mathbf{x})$ such that
	\[
		\abs{P(\mathbf{x})} \geq C H(P)^{-n},
	\]
	for all non-zero polynomials $P \in \ZZ[X_1, \dots, X_d]$ of
        total degree at most $k$. In other words, a point $\mathbf{x}
        \in \RR^d$ is $k$-badly approximable if the approximation rate
        in Corollary \ref{cor:Dir} can be improved by at most a
        positive constant in the denominator.  Let $B_k$ be the set of
        $k$-badly approximable points. Note that each set $B_k$ is a null set, which is easily
        deduced from the work of Beresnevich, Bernik, Kleinbock and
        Margulis \cite{BBKM}. 
	We will now show: 

	\begin{thm}
		Let $B \subseteq \RR^d$ be an open ball and let $M \in \NN$. Then
		\[
			\dim B \cap \bigcap_{k=1}^M B_k = d.
		\]
	\end{thm}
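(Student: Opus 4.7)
The plan is to identify each $B_k$ with the preimage, under a Veronese-type embedding, of the set of vectors that are badly approximable in the linear form sense, and then to invoke a winning-game result in order to conclude that this pullback is of full Hausdorff dimension in every open ball and, crucially, stable under finite intersections.

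First, exactly as in the proof of the preceding theorem, for each $k \in \{1,\dots,M\}$ consider the smooth map $f_k \colon \RR^d \to \RR^{n_k}$ whose coordinates are the $n_k$ distinct nonconstant monomials in $d$ variables of total degree at most $k$. A polynomial $P \in \ZZ[X_1,\dots,X_d]$ of total degree at most $k$, with coefficient vector $\mathbf{q}$ over the nonconstant monomials and constant term $p$, satisfies $P(\mathbf{x}) = \mathbf{q}\cdot f_k(\mathbf{x}) - p$ and $H(P)=\max(|p|,H(\mathbf{q}))$. Consequently, $\mathbf{x}\in B_k$ precisely when $f_k(\mathbf{x})$ is a badly approximable linear form vector in $\RR^{n_k}$ with respect to the exponent $n_k$ from Corollary \ref{cor:Dir}.

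Next, the image $f_k(\RR^d)$ is an analytic submanifold of $\RR^{n_k}$ and, as already observed in the previous proof, the partial derivatives of $f_k$ of order up to $k$ span $\RR^{n_k}$; hence $f_k$ is nondegenerate in the standard sense. One then invokes the now well-established fact that the preimage, under a nondegenerate analytic map, of the set of badly approximable linear forms is \emph{hyperplane absolute winning} (HAW) in the parameter space; this is in the spirit of the results of Broderick--Fishman--Kleinbock--Reich--Weiss and Beresnevich, with extensions by An--Guan--Kleinbock, Nesharim--Simmons, and others covering Veronese-type embeddings. Applying this to each $f_k$ yields that $B_k$ is HAW as a subset of $\RR^d$.

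Finally, the class of HAW subsets of $\RR^d$ is stable under countable intersection, and every HAW set has Hausdorff dimension $d$ in every open ball. Hence $B\cap \bigcap_{k=1}^M B_k$ is HAW in $B$, and in particular has Hausdorff dimension $d$, as required. The main obstacle in carrying out this program is locating a reference that supplies the HAW property at the precise level of generality needed for all the embeddings $f_k$ simultaneously; once that citation is in hand, the remainder is a routine combination of definition-chasing with the standard intersection and dimension properties of HAW sets.
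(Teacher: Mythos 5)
Your reduction of $B_k$ to a badly approximable condition on $f_k(\mathbf{x})$ is fine (modulo the small point that $H(P)=\max(|p|,H(\mathbf{q}))$ while the linear-form Bad condition is stated with $H(\mathbf{q})$; for the relevant polynomials $|p|\ll_{\mathbf{x}}H(\mathbf{q})$, so this only costs a constant and is fixable). The genuine gap is your second step: the assertion that $f_k^{-1}$ of the badly approximable set is hyperplane absolute winning \emph{in the parameter space} $\RR^d$. This is exactly the hard part of the argument, and none of the results you gesture at supplies it. Broderick--Fishman--Kleinbock--Reich--Weiss prove that $\Bad(n)$ is HAW in the ambient space $\RR^n$ and deduce full-dimension statements for its intersection with nondegenerate manifolds (and stability under $C^1$ diffeomorphisms of $\RR^n$), but not that the pullback under a nondegenerate map $\RR^d\to\RR^{n_k}$ with $d<n_k$ is HAW in $\RR^d$: removing hyperplane neighbourhoods in $\RR^d$ does not correspond to removing hyperplane neighbourhoods in $\RR^{n_k}$, so no formal transfer is available. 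An--Guan--Kleinbock and Nesharim--Simmons concern weighted Bad sets in the ambient plane, not pullbacks to manifolds; winning properties of Bad \emph{on} nondegenerate curves and manifolds are a substantially harder development (An--Beresnevich--Velani, Beresnevich--Nesharim--Yang), essentially restricted to curves and in any case not covering, off the shelf, all the Veronese-type embeddings $f_k$ for $d\geq 2$. Since the sole purpose of the HAW detour is to obtain intersection stability across the different $k$ (different embeddings into spaces of different dimensions), the missing step is fatal to the structure: knowing only that each $B_k$ meets every ball in a set of dimension $d$ gives no information about $\dim B\cap\bigcap_{k=1}^M B_k$.

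By contrast, the paper avoids any winning property altogether. It uses a single embedding $f:\RR^d\to\RR^{n_M}$ (monomials ordered by increasing total degree) and encodes the degree-$\leq k$ conditions, for every $k\leq M$ simultaneously, as the weighted sets $\Bad(\mathbf{r}_k)$ with weight $1/n_k$ on the first $n_k$ coordinates and weight $0$ on the rest; Lemma 1 of Beresnevich's paper transfers these simultaneous conditions to the dual (polynomial) form, giving $f^{-1}(\Bad(\mathbf{r}_k))\subseteq B_k$, and Theorem 1 of the same paper is invoked precisely because it yields full Hausdorff dimension for the intersection of finitely many weighted Bad sets with one nondegenerate analytic manifold, intersected with any ball. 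If you want to salvage your route, you would either need a theorem asserting HAW (or at least winning on the manifold) for badly approximable points pulled back under each $f_k$ -- which is stronger than what the statement requires and would in fact give the countable intersection over all $k$ -- or you should switch to the single-embedding, weighted formulation, which is what makes the finite intersection tractable with the tools actually available.
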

	This statement is deduced from the work of Beresnevich
        \cite{Ber13}, who proved the case $d=1$. 
	
	\begin{proof}
		Let $n_k = \binom{k+d}{d}-1$ as before, but with the
                dependence on $k$ made explicit in notation. Let $f :
                \RR^d \to \RR^{n_M}$ be given by $f(x_1, \dots, x_d) =
                (x_1,x_2, \dots, x_{d-1}x_d^{M-1} ,x_d^M)$, with the
                monomials ordered in blocks of increasing total
                degree.   Let $\mathbf{r}_k = (\frac{1}{n_k}, \dots,
                \frac{1}{n_k}, 0, \dots , 0) \in \RR^{n_M}$, where the
                non-zero coordinates are the first $n_k$ coordinates,
                so that $\mathbf{r}_k$ is a probability vector. 
		
We define as in \cite{Ber13} the set of $\mathbf{r}$-approximable
points for a probability vector $\mathbf{r}$ to be the set 
\begin{multline*}
\Bad(\mathbf{r}) = \Big\{\mathbf{y} = (y_1, \dots, y_{n_M}) : \text{
  for some } C(\mathbf{y}) > 0,\\ 
 \max_{1 \le i \le n_M} \Vert q y_i \Vert^{1/r_i} \ge C(\mathbf{y}) q^{-1},
 \text{ for any } q \in \NN\Big\}. 
\end{multline*}
Here, $\Vert z \Vert$ denotes the distance to the nearest integer, and
we use the convention that $z^{1/0} = 0$. 
		
Let $1 \le k \le M$ be fixed	 and let $\mathbf{x} \in \RR^d$
satisfy that $f(\mathbf{x}) \in \Bad(\mathbf{r}_k)$. From \cite[Lemma
1]{Ber13}) it follows, that there exists a constant $C = C(k,
\mathbf{x})$, such that the only integer solution $(a_0, a_1, \dots,
a_{n_k})$ to the system 
		\[
			\abs{a_0 + a_1 x_1 + a_2 x_2 + \dots 
				+ a_{n_k-1} x_{d-1}x_d^{k-1} + a_{n_k} x_d^k} < C H^{-1},
			\quad \max_i \abs{a_i} < H^{1/n_k}
		\]
		is zero. Here, the choice of $\mathbf{r}_k$ and the
                ordering of the monomials in the function $f$ ensure
                that the effect of belonging to $\Bad(\mathbf{r}_k)$
                will only give a polynomial expression of total degree
                at most $k$. Indeed, writing out the full equivalence,
                we would have the first inequality unchanged, with the
                second being $\max_i \abs{a_i} < H^{r_{k, i}}$, where the
                exponent is the $i$'th coordinate of
                $\mathbf{r}_k$. If this coordinate is $0$, we are only
                considering polynomials where the corresponding $a_i$
                is equal to zero. 
		
		Rewriting this in terms of polynomials, for any
                non-zero $P \in \ZZ[X_1, \dots, X_d]$ with $H(P) <
                H^{1/n_k}$ and total degree at most $k$,  we must have  
		\[
			\abs{P(\mathbf{x})} \geq C H^{-1} > C H(P)^{-n_k}.
		\]
		
		It follows that $\mathbf{x} \in B_k$, and hence
                $f\inv(\Bad(\mathbf{r}_k)) \subseteq B_k$. The result
                now follows by applying \cite[Theorem 1]{Ber13}, which
                implies that the Hausdorff dimension of the
                intersection of the sets $f\inv(\Bad(\mathbf{r}_k))$
                is maximal.  
	\end{proof}

Again, an interesting open problem presents itself, namely the question of uniformity of the constant $C(k, \mathbf{x})$ in $k$. Is it possible to construct a vector in $B_k$ for all $k$ with the constant being the same for all $k$? And in the affirmative case, what is the Hausdorff dimension of this set? A weaker version of this question would be to ask whether there is some natural dependence of $C(k, \mathbf{x})$ on $k$, i.e. whether one can choose $C(k, \mathbf{x}) = C(\mathbf{x})^k$ or a similar dependence. We do not at present know the answer to these questions.

\subsection{$(k,\varepsilon)$-Dirichlet improvable vectors and $k$-singular vectors}

Let $\varepsilon > 0$. A point $\mathbf{x}$ is called \emph{$(k,
\varepsilon)$-Dirichlet improvable} if for any $\varepsilon$ there
exists a $Q_0 \in \NN$, such that for any $Q \geq Q_0$ there exists a
polynomial $P \in \ZZ[X_1, \dots, X_d]$ with total degree at most $k$,
\[
		\tilde{H}(P) \leq \varepsilon Q \ \text{ and } \  
	\abs{P(\mathbf{x})} \leq \varepsilon Q^{-n}. 
	\]
Note that we are now using $\tilde{H}$ as a measure of the complexity
of our polynomials. 
 	
In view of Theorem \ref{thm:Dir}, if $\varepsilon \ge 1$, all points
clearly have this property, and so the property is only of interest
when $\varepsilon < 1$.  A vector is called \emph{$k$-singular} if it is $(k,
\varepsilon)$-Dirichlet improvable for every $\varepsilon>0$. 

We will need a few additional definitions before proceeding. For a
function $f : \RR^d \rightarrow \RR^n$, a measure $\mu$ on $\RR^d$ and
a subset $B \in \RR^d$ with $\mu(B) > 0$, we define  
\[
\Vert f \Vert_{\mu,B} = \sup_{\mathbf{x} \in B \cap \supp \mu} \vert
f(\mathbf{x}) \vert. 
\]
Let $C, \alpha > 0$ and let $U \subseteq \RR^d$ be open. We will say
that the function $f$ is \emph{$(C, \alpha)$-good with respect to $\mu$ on
$U$} if for any ball  $B \subseteq U$ with centre in $\supp \mu$ and
any $\varepsilon > 0$, 
\[
\mu\left\{\mathbf{x} \in B : \vert f(\mathbf{x}) \vert < \varepsilon
\right\} \le C \left(\frac{\varepsilon}{\Vert f
    \Vert_{\mu,B}}\right)^\alpha \mu\big(B\big). 
\]

We will say that a measure $\mu$ on $\RR^d$ is \emph{$k$-friendly} if it is
Federer, non-planar and the function $f : \RR^d \to \RR^n$  given by
$f(x_1, \dots, x_d) = (x_1,x_2, \dots, x_{d-1}x_d^{k-1} ,x_d^k)$ is
$(C, \alpha)$-good with respect to $\mu$ on $\RR^d$ for some $C,\alpha
> 0$. 

We have 
	\begin{thm}\label{thm: new}
		Let $\mu$ be a $k$-friendly measure on $\RR^d$. Then
                there is an $\varepsilon_0 = \varepsilon_0(d,\mu)$
                such that the set of $(k, \varepsilon)$-Dirichlet
                improvable points has measure zero for any
                $\varepsilon < \varepsilon_0$. In particular, the set
                of $k$-singular vector has measure zero. 
	\end{thm}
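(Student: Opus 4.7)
The plan is to follow the template of the preceding subsections: embed $\RR^d$ into $\RR^n$ via the monomial map $f(x_1,\ldots,x_d) = (x_1, x_2, \ldots, x_{d-1}x_d^{k-1}, x_d^k)$, pass to the pushforward $f_*\mu$ on $\RR^n$, and then reduce the statement to a known Dirichlet improvability theorem for friendly measures in $\RR^n$, such as the one due to Kleinbock and Weiss.

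First I would translate the definition of $(k,\varepsilon)$-Dirichlet improvability into a statement about $f(\mathbf{x})$. Writing any $P \in \ZZ[X_1,\ldots,X_d]$ of total degree at most $k$ as $P(\mathbf{x}) = p_0 + \mathbf{q}\cdot f(\mathbf{x})$, where $p_0$ is the constant term and $\mathbf{q} \in \ZZ^n$ collects the coefficients of the non-constant monomials in the order induced by $f$, one has $\tilde{H}(P) = \norm{\mathbf{q}}_\infty$. The defining inequalities $\tilde{H}(P) \leq \varepsilon Q$ and $\abs{P(\mathbf{x})} \leq \varepsilon Q^{-n}$ then become $\norm{\mathbf{q}}_\infty \leq \varepsilon Q$ and $\abs{\mathbf{q}\cdot f(\mathbf{x}) + p_0} \leq \varepsilon Q^{-n}$, which is precisely the assertion that $\mathbf{y} = f(\mathbf{x})$ is $\varepsilon$-Dirichlet improvable in the classical sense for the single linear form problem in $n$ unknowns.

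Next I would verify that the $k$-friendly hypothesis on $\mu$ implies that $f_*\mu$ is a friendly measure on $\RR^n$ in the sense of \cite{KLW04}. The Federer property of $f_*\mu$ follows from that of $\mu$ together with the smoothness of $f$; non-planarity of $f_*\mu$ amounts to $\mu\{\mathbf{x} : \ell(f(\mathbf{x})) = 0\} = 0$ for every non-trivial affine linear functional $\ell$ on $\RR^n$, and since $\ell \circ f$ is then a non-zero polynomial of total degree at most $k$, this follows from the $(C,\alpha)$-good property of $f$ by letting $\varepsilon \to 0$; and the decaying property of $f_*\mu$ translates into $(C,\alpha)$-goodness of linear combinations of coordinates of $f$ with respect to $\mu$, which is exactly the $k$-friendly hypothesis. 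This is the same type of transfer argument that was invoked implicitly in the $k$-very well approximable subsection via \cite[Theorem 2.1(b)]{KLW04}.

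With these two steps in place, I would apply the Kleinbock--Weiss theorem on Dirichlet improvability for friendly measures: there exists $\varepsilon_0 = \varepsilon_0(n, f_*\mu) > 0$ such that the set of $\varepsilon$-Dirichlet improvable points in $\RR^n$ is $f_*\mu$-null whenever $\varepsilon < \varepsilon_0$. Pulling this back through $f$ gives that the set of $(k,\varepsilon)$-Dirichlet improvable vectors in $\RR^d$ is $\mu$-null for such $\varepsilon$. The statement about $k$-singular vectors then follows immediately: a $k$-singular vector is $(k,\varepsilon)$-Dirichlet improvable for every $\varepsilon > 0$, in particular for some $\varepsilon < \varepsilon_0$, so the set of $k$-singular vectors is contained in the exceptional null set. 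The main technical point is the verification that $f_*\mu$ is friendly, but the definition of $k$-friendly has been tailored precisely for this transfer, so the real content of the proof lies in the appeal to the Kleinbock--Weiss theorem.
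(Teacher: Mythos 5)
Your overall strategy---encode degree-$\le k$ polynomials through the monomial map $f$ and reduce to a Kleinbock--Weiss non-improvability theorem---is the same as the paper's, and your first step is sound: translating $(k,\varepsilon)$-Dirichlet improvability of $\mathbf{x}$ into classical $\varepsilon$-improvability of the single linear form determined by $\mathbf{y}=f(\mathbf{x})$ (via $\tilde H(P)=\norm{\mathbf{q}}_\infty$) is correct, and arguably cleaner than the paper's passage through the dynamical sets $\DI_{\tilde\varepsilon}(\mathcal{T})$ along the central ray, where a factor $\tilde\varepsilon^{n+1}$ is introduced and then undone. The divergence is in how the measure-theoretic input is invoked. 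The paper applies \cite[Theorem 1.5]{KW08} directly to the pair $(f,\mu)$: the $k$-friendly hypothesis is tailored precisely to the hypotheses of that theorem (Federer measure, goodness and non-planarity \emph{of the pair}), so no property of the pushforward $f_*\mu$ as a measure on $\RR^n$ ever has to be established. You instead interpose the claim that $f_*\mu$ is friendly on $\RR^n$ and then invoke a friendly-measure Dirichlet theorem, and that intermediate claim is where there is a genuine gap.

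The transfer result you cite, \cite[Theorem 2.1(b)]{KLW04}, assumes $\mu$ is \emph{absolutely} friendly (absolutely decaying and Federer) and $f$ nondegenerate; a $k$-friendly measure is not assumed absolutely decaying, so that theorem does not apply, and your direct verification does not close the hole. Concretely: (i) non-planarity of $f_*\mu$ does not follow ``by letting $\varepsilon\to 0$'' in the goodness inequality, because if some nonzero affine functional $\ell$ on $\RR^n$ has $\ell\circ f\equiv 0$ on $B\cap\supp\mu$, then $\norm{\ell\circ f}_{\mu,B}=0$ and the inequality is vacuous. This is not a hypothetical: take arclength measure on the unit circle in $\RR^2$ with $k=2$ and $\ell\circ f=x_1^2+x_2^2-1$; every point of the support is $2$-algebraic, hence $2$-singular, so no goodness-type argument can yield non-planarity of $f_*\mu$ here. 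What is really needed---and what the application of \cite{KW08} uses---is non-planarity of the pair $(f,\mu)$, i.e.\ that no positive-$\mu$-measure piece of $\supp\mu$ lies in an algebraic hypersurface of degree at most $k$; this is an input, not a consequence of goodness. (ii) The Federer and decaying properties of $f_*\mu$ also do not follow from ``smoothness of $f$'' and goodness alone: one needs a comparison of balls in $\RR^n$ centred on $f(\supp\mu)$ with balls in $\RR^d$ (available locally because $f$ is a graph map, hence bi-Lipschitz on compact sets) together with the Federer property, i.e.\ essentially a re-proof of the KLW04 transfer under weaker hypotheses, and the width normalisation in the definition of decaying requires additional care. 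So either strengthen the hypothesis to absolutely friendly so that \cite[Theorem 2.1(b)]{KLW04} genuinely applies, or carry out this transfer by hand; the paper's route, applying \cite[Theorem 1.5]{KW08} to $(f,\mu)$ and then specialising $\mathcal{T}$ to the central ray, bypasses the pushforward issue entirely.
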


In the case when $d=1$, $k \ge 2$ and $\mu$ being the Lebesgue measure on $\mathbb{R}$, the result is immediate from work of Bugeaud \cite[Theorem 7]{Bugeaud02}, in which an explicit value of $\varepsilon$ is given, namely $\varepsilon = 2^{-3k-3}$. Our proof is non-effective and relies on  \cite[Theorem 1.5]{KW08}.
	
	\begin{proof}
		Under the assumption on the measure $\mu$, 
                \cite[Theorem 1.5]{KW08} implies the existence of an
                $\varepsilon_0 > 0$ such that for all
                $\tilde{\varepsilon} < \varepsilon_0$ 
		\[
			f_* \mu(\DI_{\tilde{\varepsilon}}(\mathcal{T}))=0	
			\text{ for any unbounded }\mathcal{T} \subseteq \mathfrak{a}^+	.
		\]
		
	Here, $f$ is the usual function $f(x_1, \dots, x_d) =
        (x_1,x_2, \dots, x_{d-1}x_d^{k-1} ,x_d^k)$, $\mathfrak{a}^+$
        denotes the set of $(n+1)$-tuples of $(t_0, t_1, \dots, t_n)$
        such that $t_0 = \sum_{i=1}^n t_i$, $t_i>0$ for each $i$, and
        $\DI_{\tilde{\varepsilon}}(\mathcal{T})$ denotes the set of
        vectors $\mathbf{y} = (y_1, \dots, y_n) \in \RR^n$ for which
        there is a $T_0$ such that for any $t \in \mathcal{T}$ with
        $\Vert t \Vert \ge T_0$, the system of inequalities 
	\begin{equation*}
	\begin{cases}
	\vert \mathbf{q}\cdot \mathbf{y} - p \vert< \tilde{\varepsilon} e^{-t_0} &\\
	\vert q_i \vert < \tilde{\varepsilon} e^{t_i} & i=1, \dots, n,
	\end{cases}
	\end{equation*}
has infinitely many non-trivial integer solutions $(\mathbf{q}, p) =
(q_1, \dots , q_n, p) \in \ZZ^{n+1}\setminus \{0\}$. 
	
		Our result follows by specialising the above
                property. Indeed, we apply this to $\varepsilon =
                \tilde{\varepsilon}^{n+1} < \varepsilon_0^{n+1}$ and
                the central ray in $\mathfrak{a}^+$,  
		\[
		\mathcal{T} = \set{\left(t,\frac{t}{n}, \dots,
                    \frac{t}{n} \right) : t =
                  \log\left(\frac{Q}{\tilde{\varepsilon}} \right)n, Q
                  \geq [\varepsilon_0]+1, Q \in \NN}. 
		\]
		The measure $f_* \mu$ is the pushforward under $f$ of
                the $k$-friendly measure $\mu$. It follows that the
                set of $\mathbf{x} \in \RR^d$ for which their image
                under $f$ is in
                $\DI_{\tilde{\varepsilon}}(\mathcal{T})$ is of measure
                zero for all $\tilde{\varepsilon} <
                \varepsilon_0^{n+1}$. 
		From the definition of $\DI_{\tilde{\varepsilon}}$ and
                the choice of $\mathfrak{a}^+$ and $\mathcal{T}$,
                $f(\mathbf{x}) \in \DI_{\tilde{\varepsilon}}$ if and
                only if there is a $Q_0 \ge \max\{[\varepsilon_0]+1,
                \tilde{\varepsilon}e^{T_0/n}\}$, such that for $Q >
                Q_0$  there exists $q_0, q_1, \dots, q_n \in \ZZ$ with
                $ \max_{1\leq i\leq n} \abs{q_i} < \tilde{\varepsilon}
                e^{t/n} = Q$, such that   
		\[
		\abs{(q_1, \dots q_n)\cdot f(\mathbf{x}) + q_0} <
                \tilde{\varepsilon} e^{-t}=\varepsilon Q^{-n}. 
		\]
Reinterpreting the right hand side of the above as a polynomial
expression in $\mathbf{x}$, this recovers the exact definition of
$\mathbf{x}$ being $(k,\varepsilon^{1/(n+1)})$-Dirichlet improvable. 
	\end{proof}

Note that the proof in fact yields a stronger statement. Namely, by
adjusting the choice of $\mathfrak{a}^+$, we could have put different
weights on the coefficients of the approximating polynomials, thus
obtaining the same result, but with a non-standard (weighted) height
of the polynomial. 

As with the preceding results, some open problems occur. We do not at present know if there exist a vector $\mathbf{x}$, for which there are positive numbers $\varepsilon_k > 0$, such that $\mathbf{x} \in \DI(k, \varepsilon_k)$. If this is the case, determining the Hausdorff dimension of the set of such vectors is another open problen. Additionally, the same questions can be asked if we require $\varepsilon$ to be independent of $k$, i.e. if we ask for the existence of a vector $\mathbf{x} \in \DI(k, \varepsilon)$ for all $k$.

Let us now say that $\mathbf{x} \in \mathbb{R}^d$ is \emph{$k$-algebraic} if there exists
a nontrivial polynomial $P \in \mathbb{Z}[X_1, \ldots, X_d]$ of degree at most
$k$, such that $P(\mathbf{x})=0$. It is clear that if $\mathbf{x}$ is
$k$-algebraic, then it is $k$-singular.  
In light of Theorem \ref{thm: new}, it is natural to inquire whether
all $k$-singular points are $k$-algebraic. In this direction we
have:
\begin{thm}\label{thm: another}
For $d \geq 2$, for any $k \geq 1$, there exists a  $k$-singular point
in $\mathbb{R}^d$ which is not $k$-algebraic. 
\end{thm}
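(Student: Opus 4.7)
The plan is to reinterpret $k$-singularity and $k$-algebraicity on the image of the monomial map $f\colon\RR^d\to\RR^n$ given by $f(x_1,\dots,x_d)=(x_1,x_2,\dots,x_{d-1}x_d^{k-1},x_d^k)$ used throughout the paper. As in the proof of Theorem~\ref{thm: new}, $\mathbf{x}\in\RR^d$ is $k$-singular if and only if $\mathbf{y}=f(\mathbf{x})$ is singular in the classical sense of Khintchine as a vector in $\RR^n$, and $\mathbf{x}$ is $k$-algebraic if and only if $\mathbf{y}$ lies on a rational affine hyperplane of $\RR^n$. The task is therefore to produce a point on the image variety $V=f(\RR^d)$ which is singular in $\RR^n$ but lies on no rational affine hyperplane.

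The vector $\mathbf{x}$ will be built as the unique point of a decreasing intersection of closed boxes $B_0\supset B_1\supset\cdots$ in $\RR^d$ by a Cantor-type diagonal construction. Fix an enumeration $P^{(1)},P^{(2)},\dots$ of the nonzero polynomials in $\ZZ[X_1,\dots,X_d]$ of total degree at most $k$, and pick sequences $\varepsilon_j\downarrow 0$ and $Q_j\uparrow\infty$ growing quickly enough that the values $\{Q_j\}$ are dense on a logarithmic scale, so that Theorem~\ref{thm:Dir} applied at intermediate values of $Q$ interpolates between them to yield full singularity. Given $B_{j-1}$, the induction step locates a sub-box $B_j'\subset B_{j-1}$ and an integer polynomial $P_j$ of total degree at most $k$ with $\tilde H(P_j)\le\varepsilon_j Q_j$ for which $|P_j(\mathbf{x})|\le\varepsilon_j Q_j^{-n}$ holds uniformly for every $\mathbf{x}\in B_j'$, and then shrinks $B_j'$ to a closed box $B_j\subset B_j'$ disjoint from the algebraic hypersurface $\{P^{(j)}=0\}$. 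The latter shrinking is always possible because $\{P^{(j)}=0\}$ is a proper real-algebraic subvariety of $\RR^d$. Setting $\mathbf{x}=\bigcap_j B_j$ then produces a $k$-singular vector with $P^{(j)}(\mathbf{x})\ne0$ for every $j$, and hence not $k$-algebraic.

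The main obstacle is the first part of the induction step: producing, for each $j$, a single polynomial $P_j$ of height at most $\varepsilon_j Q_j$ whose value stays below $\varepsilon_j Q_j^{-n}$ uniformly over a whole sub-box, rather than merely at a point. This is a version of Khintchine's classical construction of singular pairs in $\RR^2$, transported to the variety $V$. The hypothesis $d\ge2$ is decisive here: it furnishes at least one transverse direction inside $V\cap B_{j-1}$ along which a level set of $P_j$ can be tracked through a sub-box of positive diameter while $\tilde H(P_j)$ is held below $\varepsilon_j Q_j$. The restriction $d\ge2$ is therefore essential, as the classical fact that every singular real number is rational precludes the analogous construction when $d=1$.
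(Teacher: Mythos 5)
Your reduction to the image of the monomial map is the same first move as the paper's, but from there the two arguments diverge: the paper does not construct anything by hand, it simply observes that $f(\RR^d)$ is a $d$-dimensional nondegenerate analytic submanifold of $\RR^n$ and, for $d\ge 2$, quotes \cite[Theorem 1.2]{KW_singular} to get a totally irrational point of $f(\RR^d)\cap\mathrm{Sing}(\mathbf{n})$, which is exactly a $k$-singular, non-$k$-algebraic point. Your plan is instead a self-contained Khintchine-type nested-box construction, which would be a genuinely different (and in principle more elementary) route --- but as written it has a real gap precisely at the step that carries all the difficulty.

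The gap is the inductive production of witnesses covering \emph{all} large $Q$, not just the sparse sequence $Q_j$. To be $k$-singular, for each fixed $\varepsilon$ the polynomial $P_j$ chosen at stage $j$ must remain a valid witness for every $Q$ up to the scale at which $P_{j+1}$ takes over, i.e.\ you need roughly $\abs{P_j(\mathbf{x})}\le\varepsilon^{\,n+1}\,\tilde H(P_{j+1})^{-n}$ at the (yet unknown) limit point, for every $\varepsilon$ eventually. Your sketch replaces this overlap condition by the assertion that ``Theorem~\ref{thm:Dir} applied at intermediate values of $Q$ interpolates'', which cannot work: Theorem~\ref{thm:Dir} only gives the trivial bound $\tilde H(P)\le Q$, $\abs{P(\mathbf{x})}<Q^{-n}$, i.e.\ the case $\varepsilon=1$, and provides no $\varepsilon$-improvement at intermediate scales. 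Moreover the overlap requirement pulls directly against your non-algebraicity requirement: once $B_i$ is forced away from $\set{P_j=0}$ (as it must be, since $P_j$ appears in your enumeration), $\abs{P_j}$ has a positive lower bound on all later boxes, so the heights of subsequent witnesses must be kept small in a way coordinated with how close the earlier boxes hug the zero sets; arranging this simultaneously for every $\varepsilon>0$, while the witnesses' zero hypersurfaces thread through shrinking boxes of $\RR^d$ with $d\ge2$, is the actual content of the theorem and is nowhere carried out --- the remark that $d\ge2$ ``furnishes a transverse direction'' names the phenomenon but does not supply the bookkeeping. (Note also that one cannot shortcut this with a single Liouville-like coordinate and witnesses $qX_1-p$: the value of such a form is tied to the next denominator, and the required inequality $\|q_jx_1\|\lesssim\varepsilon^{n+1}q_{j+1}^{-n}$ fails for $n\ge2$.) Either fill in this inductive scheme in full, or do as the paper does and invoke \cite[Theorem 1.2]{KW_singular}, which was proved exactly to handle this kind of construction on nondegenerate submanifolds.
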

The proof relies on results of \cite{KW_singular}. For $d=1$, much less appears to be known in general. For $k=2$, it follows from a result of Roy \cite{roy_square} combined with a transference result (see \cite[Theorem V.XII]{cassels}) that the answer is affirmative. Roy further indicates in \cite{roy_cube} that he has an unpublished result for $k=3$, which would imply the analogue of Theorem \ref{thm: another} in the case $d=1$, $k=3$. Already for $k=2$, the construction is rather involved and a general approach would be desirable.


\begin{proof}
Once more, for a fixed $k$, we take $f$ as in the proof of Theorem \ref{thm: new}. 
In the notation of \cite{KW_singular}, it is clear that $\mathbf{x}
\in \mathbb{R}^d$
is $k$-singular if $f(\mathbf{x}) \in
\mathrm{Sing}(\mathbf{n})$. Also $f(\mathbf{x})$ is totally irrational
in the notation of \cite{KW_singular} if and only if $\mathbf{x}$ is
$k$-algebraic. 

 Since the image of $f$ is
a $d$-dimensional nondegenerate analytic submanifold of
$\mathbb{R}^n$, for $d \geq 2$ we can apply \cite[Theorem
1.2]{KW_singular} to conclude that the intersection of $f \left(\mathbb{R}^d
\right)$ with $\mathrm{Sing}(\mathbf{n})$ contains a totally
irrational point.  
\end{proof}

Theorem \ref{thm: new}  does not give an explicit value of $\varepsilon_0$,
and indeed the value depends on the measure $\mu$. However we can at
least push $\varepsilon_0$ 
to the limit $\varepsilon_0 \nearrow 1$ in the case when $\mu$ is
the Lebesgue measure on $\RR^d$ 
to obtain a result on the $k$-singular vectors. 

	\begin{thm}
	For any $d$, the set of $\mathbf{x}$ which are $(k,
        \varepsilon)$-Dirichlet improvable for some $\varepsilon <1$
        and some $k$, has Lebesgue measure zero.
	\end{thm}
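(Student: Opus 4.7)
The plan is a countable-union reduction followed by the sharp Dirichlet improvability theorem for Lebesgue measure on nondegenerate analytic submanifolds.

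First, since being $(k, \varepsilon)$-Dirichlet improvable is monotone in $\varepsilon$---a $(k, \varepsilon)$-Dirichlet improvable point is trivially $(k, \varepsilon')$-Dirichlet improvable for every $\varepsilon' \ge \varepsilon$---the set under consideration equals the countable union
\[
\bigcup_{k \in \NN}\bigcup_{m \in \NN}\set{\mathbf{x} \in \RR^d : \mathbf{x} \text{ is } (k, 1-1/m)\text{-Dirichlet improvable}}.
\]
It is therefore enough to fix $k \in \NN$ and $\varepsilon < 1$ and to prove that the set of $(k, \varepsilon)$-Dirichlet improvable vectors has Lebesgue measure zero.

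Next, I would repeat the reduction from the proof of Theorem \ref{thm: new}: with $f : \RR^d \to \RR^n$ the monomial map used there and $\mathcal{T} \subset \mathfrak{a}^+$ the central ray, $\mathbf{x}$ is $(k, \varepsilon)$-Dirichlet improvable if and only if $f(\mathbf{x}) \in \DI_{\delta}(\mathcal{T})$ for the parameter $\delta = \varepsilon^{n+1} < 1$. It therefore suffices to show that $f\inv\!\big(\DI_{\delta}(\mathcal{T})\big)$ has Lebesgue measure zero for every $\delta < 1$. The crucial step is then to replace the input \cite[Theorem 1.5]{KW08}, which in the proof of Theorem \ref{thm: new} only delivers some implicit $\varepsilon_0(d, \mu) < 1$, by its sharp Lebesgue analogue: since $f(\RR^d)$ is a nondegenerate analytic submanifold of $\RR^n$ (its partial derivatives of order up to $k$ span $\RR^n$, as already used in the earlier proofs of Section 2), one applies Shah's theorem on Dirichlet non-improvability along nondegenerate analytic submanifolds---the Lebesgue version of \cite[Theorem 1.5]{KW08} with the sharp constant $\varepsilon_0 = 1$---to conclude that $f\inv(\DI_{\delta}(\mathcal{T}))$ is Lebesgue null for every $\delta < 1$.

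The main obstacle is precisely this last step. The quantitative non-divergence and Borel--Cantelli arguments underlying \cite[Theorem 1.5]{KW08} only furnish some non-explicit $\varepsilon_0 < 1$ depending on the friendliness constants of $\mu$, and sharpening the constant all the way to $\varepsilon_0 = 1$ in the Lebesgue case genuinely requires the deeper equidistribution input of expanding translates of unipotent orbits on the space of unimodular lattices, where the nondegeneracy of the Veronese-type image $f(\RR^d)$ becomes essential.
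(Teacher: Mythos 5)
Your argument is correct and is essentially the paper's: everything rests on Shah's sharp Dirichlet non-improvability theorem for Lebesgue measure on a nondegenerate analytic submanifold, which the paper invokes directly as \cite[Corollary 1.4]{Shah} with $\mathcal{N}$ taken to be the diagonal, while you merely add a (harmless) countable-union reduction and rerun the $\DI_{\delta}(\mathcal{T})$ translation from the proof of Theorem \ref{thm: new} before citing the same input. One bookkeeping remark: the dynamical parameter corresponding to $(k,\varepsilon)$-improvability is $\varepsilon^{1/(n+1)}$ rather than $\varepsilon^{n+1}$, but since either is $<1$ exactly when $\varepsilon<1$, this does not affect your conclusion.
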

The proof relies on the work of Shah \cite{Shah}. 

\begin{proof}
This is a direct consequence of \cite[Corollary 1.4]{Shah}, where the
set $\mathcal{N}$ is chosen to be the diagonal $\mathcal{N} = \{(N,
\dots, N) : N \in \mathbb{N}\}$. 
\end{proof}
	
	Note that once again, the result of Shah gives a stronger
        result in the sense that we may take a non-standard height as
        in the preceding case and retain the conclusion.

\subsection{Algebraic vectors}

Our final result, which is again a corollary of known results, is an
analogue of Roth's Theorem \cite{Roth}, which states that algebraic
numbers are not very well approximable. Schmidt's Subspace Theorem,
see e.g. \cite{Sch80}, provides a higher dimensional analogue of this
result, and it is this theorem we will apply. We will say that a
vector $\boldsymbol{\alpha} = (\alpha_1, \dots, \alpha_d) \in \RR^d$
is \emph{algebraic of total degree $k$} if there is a polynomial
$P_{\boldsymbol{\alpha}} \in \ZZ[X_1, \dots, X_d]$ of total degree $k$
with $P_{\boldsymbol{\alpha}}(\boldsymbol{\alpha}) = 0$ and if no
polynomial of lower total degree vanishes at $\boldsymbol{\alpha}$. 

	\begin{thm}
		Let $\boldsymbol{\alpha} = (\alpha_1, \dots, \alpha_d)
                \in \RR^d$ be an algebraic $d$-vector of total degree
                more than $k$. Then for any $\varepsilon > 0$ there
                are only finitely many non-zero polynomials $P \in
                \ZZ[X_1, \dots, X_d]$ of total degree at most $k$ with 
		\[
			\abs{P(\boldsymbol{\alpha})} < H(P)^{-(n+\varepsilon)}.	
		\]
	\end{thm}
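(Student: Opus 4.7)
The plan is to apply Schmidt's Subspace Theorem to an $(n+1)$-tuple of linear forms built from the monomials evaluated at $\boldsymbol{\alpha}$, and then to descend through rational subspaces using the hypothesis that $\boldsymbol{\alpha}$ has total algebraic degree strictly greater than $k$. Concretely, I would enumerate the $n$ nonconstant monomials in $d$ variables of total degree at most $k$, let $\omega_1, \ldots, \omega_n$ be their values at $\boldsymbol{\alpha}$, and set $\omega_0 = 1$; each $\omega_i$ is real algebraic. A polynomial $P$ of total degree at most $k$ is then encoded by its integer coefficient vector $\mathbf{a} = (a_0, \ldots, a_n) \in \ZZ^{n+1}$, with $P(\boldsymbol{\alpha}) = \sum_{i=0}^n \omega_i a_i$ and $H(P) = \|\mathbf{a}\|_\infty$. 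I would introduce the $n+1$ linearly independent linear forms with algebraic coefficients
\[
L_0(\mathbf{a}) = \sum_{i=0}^n \omega_i a_i, \qquad L_i(\mathbf{a}) = a_i \quad (1 \leq i \leq n);
\]
the degree hypothesis on $\boldsymbol{\alpha}$ is exactly the statement that $L_0$ does not vanish on $\ZZ^{n+1} \setminus \{0\}$.

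For any $\mathbf{a}$ satisfying the approximation inequality and having no zero coordinate (coordinate hyperplanes are absorbed as part of the subspace list below), the crude bounds $|a_i| \leq H(P)$ yield
\[
\prod_{i=0}^n |L_i(\mathbf{a})| < H(P)^{-(n+\varepsilon)} \cdot H(P)^n = \|\mathbf{a}\|_\infty^{-\varepsilon},
\]
so Schmidt's Subspace Theorem places all such $\mathbf{a}$ in a finite union of proper rational subspaces of $\QQ^{n+1}$. To promote ``finitely many subspaces'' to ``finitely many polynomials'' I would induct on the dimension $r$ of the subspace $V$. Using a $\ZZ$-basis of $V \cap \ZZ^{n+1}$, the problem becomes one in $r$ integer variables $\mathbf{c}$ with $\|\mathbf{c}\|_\infty \asymp H(P)$; I complete $L_0|_V$ (which is not identically zero, thanks to the degree hypothesis) to a system of $r$ linearly independent restrictions $\widetilde{L}_0, \widetilde{L}_{j_1}, \ldots, \widetilde{L}_{j_{r-1}}$, and the analogous estimate bounds their product by $\|\mathbf{c}\|_\infty^{-(n-r+1+\varepsilon)}$ with exponent strictly positive since $r \leq n$. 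Schmidt again confines solutions to finitely many proper subspaces of $V$, and the base case $r = 1$ with $V = \QQ\mathbf{v}$ is immediate: the inequality reduces to $|\lambda| \cdot |L_0(\mathbf{v})| < (|\lambda|\|\mathbf{v}\|_\infty)^{-(n+\varepsilon)}$, and since $L_0(\mathbf{v}) \neq 0$ only finitely many $\lambda \in \ZZ$ can satisfy it.

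The main obstacle is bookkeeping in the descent step: one must verify that $L_0|_V$ can always be completed to a basis of $V^*$ by restrictions of the remaining $L_i$, and that the nonvanishing of $L_0$ on nonzero integer vectors carries through to every subspace that arises. Both points are routine linear algebra once the setup above is in place, so the genuine content of the argument lies in the construction of the forms $L_0, \ldots, L_n$ together with the single invocation of Schmidt's Subspace Theorem.
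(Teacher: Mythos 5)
Your argument is correct, and its core reduction is the same as the paper's: encode $P$ by its integer coefficient vector, evaluate the nonconstant monomials at $\boldsymbol{\alpha}$ to obtain the single linear form $L_0$, and observe that the hypothesis ``total degree more than $k$'' is exactly the $\QQ$-linear independence of $1,\omega_1,\dots,\omega_n$, i.e.\ the nonvanishing of $L_0$ on $\ZZ^{n+1}\setminus\{0\}$. The difference is what you take as a black box. The paper quotes a ready-made corollary of the Subspace Theorem (\cite[Chapter VI, Corollary 1E]{Sch80}), which states precisely that a linear form whose real algebraic coefficients are, together with $1$, linearly independent over $\QQ$ admits only finitely many integer solutions of $\abs{q_0+q_1\omega_1+\cdots+q_n\omega_n} < (\max_{1\le i\le n}\abs{q_i})^{-(n+\varepsilon)}$; this makes the paper's proof two lines. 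You instead re-derive that corollary from the Subspace Theorem proper, via the product of the forms $L_0,a_1,\dots,a_n$ and a descent through the finitely many exceptional rational subspaces. The bookkeeping you flag does go through: a rational subspace $V$ cannot lie in $\ker L_0$ (it would contain a nonzero integer vector annihilating $L_0$); $L_0|_V$ together with restricted coordinate forms spans $V^*$ (if $e_0\in V$ the forms $a_1,\dots,a_n$ restrict to a hyperplane of $V^*$ not containing $L_0|_V$, otherwise they already span $V^*$ and the exchange lemma applies); the comparability of $\norm{\mathbf{c}}_\infty$ with $H(P)$ involves constants depending only on the finitely many subspaces; and the exponent $n-r+1+\varepsilon$ stays positive down to the one-dimensional base case. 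So your proof is self-contained modulo the Subspace Theorem itself, at the cost of reproving a standard corollary, while the paper's is shorter by citation. One small point in your favour: the paper's proof asserts the monomial values are ``algebraically independent over $\QQ$'', which is more than can hold in general (e.g.\ $\alpha_1$ and $\alpha_1^2$ are algebraically dependent) and more than is needed; $\QQ$-linear independence, which is what you isolate and use, is the correct hypothesis.
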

	
	\begin{proof}
		Since $\boldsymbol{\alpha}$ in not algebraic of total
                degree at most $k$, by definition it follows that the
                numbers $1, \alpha_1, \alpha_2, \dots,
                \alpha_{d-1}\alpha_d^{k-1} ,\alpha_d^k$ are
                algebraically independent over $\QQ$. From a corollary
                to Schmidt's Subspace Theorem, \cite[Chapter VI
                Corollary 1E]{Sch80}, it follows that there are only
                finitely many non-zero integer solutions $(q_0, \dots,
                q_n)$ to  
		\[
			\abs{q_0 + q_1 \alpha_1 + q_2 \alpha_2 + \dots+ 
			q_{n-1} \alpha_{d-1}\alpha_d^{k-1} + q_n \alpha_d^k} 
			< (\max_{1 \leq i \leq n}{\abs{q_i}})^{-(n+\varepsilon)}.
		\] 
This immediately implies the result.
	
	\end{proof}

\nocite{*}
\bibliographystyle{amsplain}
\bibliography{paper_modified_barak}

\end{document}